\documentclass[reqno,12pt]{amsart}

\NeedsTeXFormat{LaTeX2e}[1994/12/01]

\usepackage{amsmath, amsthm, amsfonts, amssymb}
\input{mathrsfs.sty}


\textheight23.5cm
\textwidth16cm

\addtolength{\topmargin}{-40pt}
\addtolength{\oddsidemargin}{-1.8cm}
\addtolength{\evensidemargin}{-1.8cm}

\usepackage{amsmath}
\usepackage{amsfonts}
\usepackage{amssymb}
\usepackage{eufrak}
\usepackage{amscd}
\usepackage{amsthm}
\usepackage{amstext}
\usepackage[all]{xy}


\newcommand{\Ran}{\operatorname{Ran}}

   \theoremstyle{plain}
   \newtheorem{thm}{Theorem}
   
   \newtheorem{lem}[thm]{Lemma}
   \newtheorem{cor}[thm]{Corollary}
   \theoremstyle{definition}
   
   \newtheorem{defn}[thm]{Definition}
   
	\newtheorem*{question*}{Question}

   \theoremstyle{remark}
   
   \newtheorem{remark}[thm]{Remark}



\author{V. Manuilov}

\date{}

\address{Moscow Center for Fundamental and Applied Mathematics, Moscow State University,
Leninskie Gory 1, Moscow, 
119991, Russia}

\email{manuilov@mech.math.msu.su}


\title{Restricting operators to thick Hilbert $C^*$-submodules}

\sloppy

\begin{document}

\begin{abstract}
Given an essential ideal $J\subset A$ of a $C^*$-algebra $A$, and a Hilbert $C^*$-module $M$ over $A$, we place $M$ between two other Hilbert $C^*$-modules over $A$, $M_J\subset M\subset M^J$, in such a way that each submodule here is thick, i.e. its orthogonal conmplement in the greater module is trivial. We introduce the class $\mathbb B_J(M)$ of $J$-adjointable operators on a Hilbert $C^*$-module $M$ over $A$, and prove that this class isometrically embeds into the $C^*$-algebras of all adjointable operators both of $M_J$ and of $M^J$.

\end{abstract}

\maketitle

\section{Introduction}

One of the main distinctions of Hilbert $C^*$-modules over a $C^*$-algebra $A$ from Hilbert spaces is the lack of adjoints for bounded operators. Adjointable operators on a Hilbert $C^*$-module form a $C^*$-algebra, but general bounded operators are only a Banach algebra. The aim of this short paper is to show that, at least for some operators, this may be considered as an accidental misconception. To this end, given an essential ideal $J\subset A$ and a Hilbert $C^*$-module $M$ over $A$, we place $M$ between two other Hilbert $C^*$-modules over $A$, $M_J\subset M\subset M^J$, in such a way that each submodule here is thick. Recall that a submodule $N\subset M$ is thick if its orthogonal conmplement $N^\perp$ is trivial. This notion attracted attention after the results of \cite{Kaad-Skeide} showed that an extension of a functional from $N$ to $M$ need not be unique. We introduce the class $\mathbb B_J(M)$ of $J$-adjointable operators on a Hilbert $C^*$-module $M$ over $A$, and prove that this class isometrically embeds into the $C^*$-algebras of all adjointable operators both of $M_J$ and of $M^J$, showing that if a bounded operator is $J$-adjointable for some essential ideal $J\subset A$ then it becomes adjointable when we pass from $M$ to either $M_J$ or $M^J$. 

\medskip

Recall basic definitions. For a $C^*$-algebra $A$, let $M$ be a right $A$-module with a compatible structure of a linear space, equipped with a sesquilinear map $\langle\cdot,\cdot\rangle:M\times M\to A$ such that
\begin{itemize}
\item
$\langle n,ma\rangle=\langle n,m\rangle a$ for any $n,m\in M$, $a\in A$;
\item
$\langle n,m\rangle=\langle m,n\rangle^*$ for any $n,m\in M$;
\item
$\langle m,m\rangle$ is positive for any $m\in M$, and if $\langle m,m\rangle=0$ then $m=0$. 
\end{itemize} 
Then $M$ is a pre-Hilbert $C^*$-module. If it is complete with respect to the norm given by $\|m\|^2=\|\langle m,m\rangle\|$ then $M$ is a Hilbert $C^*$-module.

A bounded linear map $f:M\to A$ is a functional on $M$ if it is anti-$A$-linear, i.e. if $f(ma)=a^*f(m)$ for any $a\in A$, $m\in M$. The set of all functionals forms a Banach space which is also a right $A$-module with the action of $A$ given by $(fa)(m)=f(m)a$, $a\in A$, $m\in M$. The map $m\mapsto \widehat{m}$, where $\widehat{m}(n)=\langle n,m\rangle$, $m,n\in M$, defines an isometric inclusion $M\subset M'$ (and we identify $M$ with its image $\widehat M$ in $M'$).  
For Hilbert $C^*$-modules $M$ and $N$ over a $C^*$-algebra $A$, a bounded linear map $T:M\to N$ is called an $A$-operator (or simply an operator) if $T(ma)=T(m)a$ for any $m\in M$ and any $a\in A$. We denote the set of all operators from $M$ to $N$ by $\mathbb B(M;N)$. If $N=M$ then we use the notation $\mathbb (M)$. It is known that $\mathbb B(M)$ is a Banach algebra, but usually not a $C^*$-algebra, as operators may have no adjoint operator. An operator $T\in\mathbb B(M;N)$ is adjointable if there exists an operator $S\in\mathbb B(N;M)$ such that $\langle Sn,m\rangle_M=\langle n,Tm\rangle_N$ for any $m\in M$, $n\in N$. In this case we write $S=T^*$. The set $\mathbb B^*(M)$ of all adjointable operators in $\mathbb B(M)$ is a $C^*$-algebra, moreover, the algebra $\mathbb B(M)$ is the algebra of left multipliers for the $C^*$-algebra $\mathbb B^*(M)$.

Although there may be no adjoint operator for some $T\in\mathbb B(M;N)$, an adjoint operator patently exists as a map from $N'$ to $M'$ given by $T^*(f)(m)=f(T(m))$, $f\in N'$, $m\in M$. It even can be restricted to $N\subset N'$, but its range may go outside $M\subset M'$.    

Another feature of Hilbert $C^*$-modules is that, unlike Hilbert spaces, they may contain the so-called thick submodules. A closed submodule $N\subset M$ is thick if its orthogonal complement $N^\perp=\{m\in M:\langle m,n\rangle=0 \ \forall n\in N\}$ in $M$ is zero. 

More details on Hilbert $C^*$-modules can be found in \cite{Lance} or \cite{MT}.

\section{Making a sandwich of Hilbert $C^*$-modules from an essential ideal}

By an ideal in a $C^*$-algebra we always mean a closed two-sided ideal.

Let $J\subset A$ be an essential ideal, i.e. $J\cap I\neq 0$ for any non-zero ideal $I$, or, equivalently, $aJ=0$ implies $a=0$ for $a\in A$. 

For a Hilbert $C^*$-module $M$ over $A$, the closure $\overline{MJ}$ of the set 
$$
MJ=\Bigl\{\sum\nolimits_i{m_ix_i}:m_i\in M,x_i\in J\Bigr\}
$$ 
(the sums are finite here) is a $C^*$-submodule in $M$. We shall denote it by $M_J$.

\begin{lem}
If $J$ is an essential ideal then the $C^*$-submodule $M_J$ is thick.

\end{lem}
\begin{proof}
Let $n\perp M_J$. Then $0=\langle n,mx\rangle=\langle n,m\rangle x$ for any $m\in M$ and any $x\in J$. As $J$ is essential, $\langle n,m\rangle=0$ for any $m\in M$, whence $n=0$.
\end{proof}

Set 
$$
M_1(J)=\{f\in M':fx\in M\ \forall x\in J\}. 
$$
Clearly, $M\subset M_1(J)\subset M'$, and $M_1(J)$ is a Banach module over $A$. Moreover, it is a Hilbert $C^*$-module over the multiplier $C^*$-algebra $M(J)$ of $J$: indeed, for any $f,g\in M_1(J)$ we may set 
$$
\lambda_{f,g}(x)=g(f(x))^*, \quad \rho_{f,g}(x)=f(g(x^*)), \quad x\in J, 
$$
then $\lambda_{f,g}$ (resp., $\rho_{f,g}$) is a left (resp., right) multiplier of $J$, and $m_{f,g}=(\lambda_{f,g},\rho_{f,g})\in M(J)$ is a double multiplier, so one may set $\langle g,f\rangle=m_{f,g}$, which turns $M_1(J)$ into a Hilbert $C^*$-module over $M(J)$. This construction follows the construction of an extension of a Hilbert $C^*$-module by D. Bakic and B. Guljas \cite{B-G}. 

When $J$ is essential, it induces the inclusion $\alpha:A\to M(J)$. We may identify $A$ with the range of $\alpha$.

Set 
$$
M^J=\{f\in M_1(J):\langle g,f\rangle\in\Ran\alpha\ \forall g\in M_1(J)\}. 
$$
Then $M^J$ is a Hilbert $C^*$-module over $A$.

Thus, we have 
$$
M_J\subset M\subset M^J.
$$   

\begin{lem}\label{Lem2}
$(M^J)_J=M_J$.

\end{lem}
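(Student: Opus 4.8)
\medskip

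The claim is an equality of two submodules, so I would prove the two inclusions separately.

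\emph{First, $M_J\subseteq(M^J)_J$.} This part is formal: the operation $N\mapsto N_J=\overline{NJ}$ is monotone, and we already know $M\subseteq M^J$, so $MJ\subseteq M^JJ$ and, taking closures (which yield the same submodule whether computed inside $M$ or inside the larger module $M^J$, since $M$ is closed in $M^J$), $M_J\subseteq(M^J)_J$.

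\emph{Second, and this is the substance, $(M^J)_J\subseteq M_J$.} The key point is that $M^J$ sits inside $M_1(J)=\{f\in M':fx\in M\ \forall x\in J\}$, so for $f\in M^J$ and $x\in J$ the product $fx$ is already an element of $M$; in other words $M^JJ\subseteq M$. Since $M_J=\overline{MJ}$ is norm-closed, it suffices to show that each such $fx$ is a norm-limit of elements of $MJ$. Here I would use an approximate unit $(u_\lambda)$ of $J$: writing $fx=\widehat m$ with $m\in M$, one has $(fx)u_\lambda=\widehat{m}u_\lambda=\widehat{mu_\lambda}$ with $mu_\lambda\in MJ$, while $xu_\lambda\to x$ in $A$ forces $(fx)u_\lambda\to fx$ by norm-continuity of the $A$-action together with the isometry of the inclusion $M\subseteq M'$. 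Hence $fx\in\overline{MJ}=M_J$, so $M^JJ\subseteq M_J$ and, taking closures once more, $(M^J)_J\subseteq M_J$.

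I do not anticipate a genuine obstacle: the whole argument reduces to a single approximate-unit computation. The only thing demanding care is keeping track of the several isometric identifications in play — $M\subseteq M'$, $M\subseteq M^J$, and the restriction of the Hilbert $M(J)$-module norm on $M_1(J)$ to the original norm on $M$ — so that the limit $(fx)u_\lambda\to fx$ may be read off in whichever of these modules is most convenient. If one wished to avoid $M'$ entirely, one could instead note that for the element $m\in M$ with $\widehat m=fx$ the inner product $\langle m,m\rangle=f(m)x$ lies in $J$, and then expand $\|mu_\lambda-m\|^2$ directly against the approximate unit; but routing through $M'$ is cleaner.
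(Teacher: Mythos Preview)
Your argument is correct. The only difference from the paper is in the second inclusion: where you pass through an approximate unit of $J$ to exhibit $fx$ as a limit of elements $mu_\lambda\in MJ$, the paper instead invokes the factorization property of $C^*$-algebras --- every $x\in J$ can be written as $x=x_1x_2$ with $x_1,x_2\in J$ --- and then observes directly that $fx=(fx_1)x_2\in MJ$, since $fx_1\in M$ by definition of $M_1(J)$. So the paper gets the slightly sharper conclusion $M^JJ\subseteq MJ$ (before closure) in one line, while your approximate-unit route yields $M^JJ\subseteq\overline{MJ}$; for the lemma as stated both suffice. Your aside about computing $\|mu_\lambda-m\|^2$ via $\langle m,m\rangle\in J$ is also fine and is essentially the same approximate-unit idea rephrased inside $M$.
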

\begin{proof}
Any element $x$ of a $C^*$-algebra $J$ can be written as a product $x=x_1x_2$ with $x_1,x_2\in J$. If $f\in M^J$ then $fx\in M$ for any $x\in J$, so $f(x_1)\in M$ and $f(x)=f(x_1)x_2\in M_J$.

\end{proof}

\begin{cor}
$M_J$ and $M$ are thick submodules in $M^J$. 

\end{cor}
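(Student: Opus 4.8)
The statement to prove is that both $M_J$ and $M$ are thick submodules of $M^J$. The plan is to reduce everything to the first Lemma (thickness of $M_J$ inside an arbitrary Hilbert module over $A$) together with Lemma~\ref{Lem2}. I would argue as follows.

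First I would show $M_J$ is thick in $M^J$. Since $M^J$ is itself a Hilbert $C^*$-module over $A$ and $J\subset A$ is an essential ideal, the first Lemma applied to $M^J$ in place of $M$ tells us that $(M^J)_J=\overline{M^J J}$ is thick in $M^J$. But by Lemma~\ref{Lem2} we have $(M^J)_J=M_J$, so $M_J$ is thick in $M^J$. This is the short half.

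Second, I would deduce that $M$ is thick in $M^J$ from the fact that $M_J\subset M\subset M^J$: if $f\in M^J$ satisfies $f\perp M$ in $M^J$, then in particular $f\perp M_J$, and since $M_J$ is thick in $M^J$ we get $f=0$. More generally, a closed submodule sandwiched between a thick submodule and the ambient module is automatically thick, because its orthogonal complement is contained in that of the smaller one. This gives thickness of $M$ in $M^J$ with no further work.

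I do not expect a genuine obstacle here; the content is entirely carried by the first Lemma and Lemma~\ref{Lem2}. The only point requiring a moment's care is that the orthogonality in the first Lemma (and hence in Lemma~\ref{Lem2}) is taken with respect to the $A$-valued inner product on $M^J$, which by construction of $M^J$ agrees with the restriction of the $M(J)$-valued inner product on $M_1(J)$; one should note that $M_J$ and $M$ sit inside $M^J$ as $A$-submodules with the correct inner product, so that "orthogonal complement in $M^J$" means what one expects. Once this identification is in place, both assertions are immediate.
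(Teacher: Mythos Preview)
Your argument is correct and is exactly the intended one: the paper states the Corollary without proof, as an immediate consequence of applying the first Lemma to $M^J$ together with Lemma~\ref{Lem2}, and then using the inclusion $M_J\subset M\subset M^J$ for the second assertion. Your remark about the inner products agreeing is a reasonable sanity check but not an issue, since $M^J$ is by definition a Hilbert $A$-module with the restricted inner product.
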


\begin{remark}
We do not know if $(M_J)^J=M^J$.

\end{remark}

\section{$J$-adjointable operators}

\begin{defn}
Let $M,N$ be Hilbert $C^*$-modules over $A$, and let $J\subset A$ be an essential ideal.
A bounded operator  $T\in\mathbb B(M;N)$ is $J$-adjointable if $T^*(\widehat{n})x\in \widehat M$ for any $n\in N$ and any $x\in J$. 

\end{defn}

Note that if $T$ is adjointable then, for any $n\in N$, $T^*(\widehat{n})=\widehat{m}$ for some $m\in M$, hence $T$ is $J$-adjointable.

\begin{lem}
Let $T\in\mathbb B(M,N)$. Then $T(M_J)\subset M_J$.

\end{lem}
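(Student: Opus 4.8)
The plan is to use only two properties of $T$: that it intertwines the right $A$-action, and that it is bounded, hence continuous. First I would unwind the definition of $M_J$: by construction $M_J=\overline{MJ}$, where $MJ$ consists of finite sums $\sum_i m_ix_i$ with $m_i\in M$ and $x_i\in J$. So it suffices to control $T$ on such finite sums and then pass to the closure.

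Next I would observe that for any $m\in M$ and any $x\in J$ we have $T(mx)=T(m)x$, since $T\in\mathbb B(M,N)$ is an $A$-operator; by linearity, $T$ carries the algebraic submodule $MJ$ into $NJ$ (into $MJ$ in the case $N=M$ to which the statement refers). Finally, since $T$ is a bounded linear map and $M_J=\overline{MJ}$, continuity gives
$$
T(M_J)=T(\overline{MJ})\subseteq\overline{T(MJ)}\subseteq\overline{NJ}=N_J,
$$
which is the assertion.

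There is no real obstacle here; the lemma is essentially a restatement of the fact that an $A$-operator already respects the algebraic submodule $MJ$, so that continuity of $T$ propagates the inclusion to the norm closure. It is worth noting that thickness of $M_J$ (Lemma~1) is \emph{not} needed for this step — it will enter only later, when adjoints are brought into play and one has to recover an operator from its behaviour on a thick submodule.
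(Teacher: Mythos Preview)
Your proof is correct and follows the same two-step strategy as the paper: first show $T(MJ)\subset NJ$ algebraically, then pass to closures by continuity. The only cosmetic difference is that the paper writes $T(mx)=T(mx_1x_2)=T(mx_1)x_2$ via a factorization $x=x_1x_2$ in $J$, whereas you use the more direct $T(mx)=T(m)x$; your route is cleaner, and you also correctly flag that the conclusion should read $N_J$ rather than $M_J$.
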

\begin{proof}
As $T(mx)=T(mx_1x_2)=T(mx_1)x_2$, $x,x_1,x_2\in J$, we see that $T(MJ)\subset NJ$. Continuiuty of $T$ finishes the proof.

\end{proof}

Let $\underline{T}=T|_{M_J}$ be the restriction of $T$ onto $M_J$.

\begin{lem}
If $T$ is $J$-adjointable then $\underline{T}$ is adjointable.

\end{lem}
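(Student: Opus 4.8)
The plan is to construct the adjoint $\underline T^{*}\colon N_{J}\to M_{J}$ by hand on the dense submodule $NJ\subset N_{J}$ and then extend it by continuity. By the preceding lemma, $\underline T=T|_{M_{J}}$ maps $M_{J}$ into $N_{J}$, so it is at least a well-defined operator $M_{J}\to N_{J}$. Given $n'=\sum_{i}n_{i}x_{i}\in NJ$ with $n_{i}\in N$ and $x_{i}\in J$, I would look at its image $\widehat{n'}\in N'$: since $\widehat{n_{i}x_{i}}=\widehat{n_{i}}\,x_{i}$ we have $\widehat{n'}=\sum_{i}\widehat{n_{i}}\,x_{i}$, and applying the always-defined formal adjoint $T^{*}\colon N'\to M'$, $T^{*}(f)(m)=f(Tm)$, together with the identity $T^{*}(fx)=T^{*}(f)x$, gives $T^{*}(\widehat{n'})=\sum_{i}T^{*}(\widehat{n_{i}})\,x_{i}$. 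By $J$-adjointability each summand $T^{*}(\widehat{n_{i}})\,x_{i}$ lies in $\widehat M$, hence so does $T^{*}(\widehat{n'})$; since $M\hookrightarrow M'$ is an isometric inclusion, there is a unique $m'\in M$ with $\widehat{m'}=T^{*}(\widehat{n'})$, and I set $\underline S(n')=m'$. This depends only on $\widehat{n'}$, so $\underline S\colon NJ\to M$ is a well-defined $A$-linear map.

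Two short computations then do the work. First, for $m\in M$,
$$
\langle m,\underline S(n')\rangle_{M}=\widehat{m'}(m)=T^{*}(\widehat{n'})(m)=\widehat{n'}(Tm)=\langle Tm,n'\rangle_{N},
$$
so, passing to adjoints, $\langle \underline S(n'),m\rangle_{M}=\langle n',Tm\rangle_{N}$ for all $n'\in NJ$ and $m\in M$. Second, $\|\underline S(n')\|=\|T^{*}(\widehat{n'})\|\le\|T^{*}\|\,\|\widehat{n'}\|\le\|T\|\,\|n'\|$, using $\|T^{*}\|\le\|T\|$ and that $M\subset M'$, $N\subset N'$ are isometric. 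Thus $\underline S$ is bounded on the dense submodule $NJ\subset N_{J}$ and extends uniquely to $S\in\mathbb B(N_{J};M)$ with $\|S\|\le\|T\|$. To see that $S$ actually lands in $M_{J}$, factor each $x\in J$ as $x=x_{1}x_{2}$ with $x_{1},x_{2}\in J$ (as in the proof that $T(M_{J})\subset M_{J}$); then $A$-linearity of $\underline S$ gives $\underline S(nx)=\underline S(nx_{1})x_{2}\in MJ\subset M_{J}$, so $\underline S(NJ)\subset M_{J}$ and, $M_{J}$ being closed, $S(N_{J})\subset M_{J}$.

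It remains to pass the identity to the closure: for $n'\in NJ$ and $m\in M_{J}$ it reads $\langle S(n'),m\rangle_{M}=\langle n',\underline T m\rangle_{N}$, and since both sides are continuous in $n'$ and $NJ$ is dense in $N_{J}$, the same holds for every $n'\in N_{J}$. Hence $S=\underline T^{*}$, so $\underline T$ is adjointable, and in fact $\|\underline T^{*}\|\le\|T\|$. The one place where the hypothesis is genuinely used is the claim that $T^{*}(\widehat{n'})\in\widehat M$ for $n'\in NJ$: this is precisely $J$-adjointability applied to the $n_{i}$ and $x_{i}$ separately, and it is the step I expect to be the crux — everything afterwards is bookkeeping with the module structure plus a density argument. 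The only routine verifications are the compatibility identities $\widehat{na}=\widehat n\,a$ and $T^{*}(fa)=T^{*}(f)a$, and the factorization $x=x_{1}x_{2}$ in $J$.
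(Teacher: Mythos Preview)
Your proof is correct and follows essentially the same approach as the paper: both define the candidate adjoint on $NJ$ by the formula $\widehat{S(nx)}=T^{*}(\widehat{n})x$ and verify the adjoint identity $\langle my,S(nx)\rangle=\langle nx,T(my)\rangle$ by unwinding the definition of $T^{*}\colon N'\to M'$. The paper compresses everything into the bare assertion that this formula ``defines an $A$-linear operator $S$ from $N_J$ to $M_J$'' followed by the adjoint computation, whereas you carefully supply the details it elides --- well-definedness, boundedness via $\|T^{*}\|\le\|T\|$, the factorization $x=x_1x_2$ forcing the range into $M_J$, and the extension by density.
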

\begin{proof}
The formula $\widehat{S(nx)}=T^*(\widehat{n}x)$, $n\in N$, $x\in J$, defines an $A$-linear operator $S$ from $N_J$ to $M_J$. 
As 
\begin{eqnarray*}
\langle my, S(nx)\rangle&=&\widehat{S(nx)}(my)\\
&=&T^*(\widehat{nx})(my)\\
&=&\widehat{nx}(T(my))\\
&=&\langle nx,T(my)\rangle, 
\end{eqnarray*}
where $m\in M$, $y\in J$, we see that $S$ is the adjoint for $\underline{T}$.
\end{proof}

Now let us define an extension $\overline{T}$ of a $J$-adjointable operator $T$ to $M^J$.
Let $f\in M^J$, let $T$ be a $J$-adjointable operator from $M$ to $N$, and let $n\in N$. Then put 
$$
\overline{T}(f)(n)=m_{f,T^*(\widehat{n})}.
$$ 
As $T$ is $J$-adjointable, $T^*(\widehat{n})\in M_1(J)$, hence $m_{f,T^*(\widehat{n})}\in\alpha(A)\subset M(J)$. $A$-linearity and boundedness of $\overline{T}(f):N\to A$ is obvious, so $\overline{T}(f)$ is a functional on $N$, i.e. $\overline{T}(f)\in N'$. 

Set also $S(g)(m)=m_{T(m),g}$, where $g\in N_1(J)$, $m\in M$. The same argument shows that $S(g)\in M'$. As any $x\in J$ can be written as a product $x=x_1x_2$, $x_1,x_2\in J$, $\overline{T}(f)\in N_1(J)$ and $S(g)\in M_1(J)$. 

\begin{lem}
$\overline{T}(f)\in N^J$, $S(g)\in M^J$, and $S$ is the adjoint for $\overline{T}$. 

\end{lem}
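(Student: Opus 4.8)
The plan is to verify three things in turn: that $\overline{T}(f)$ lands in $N^J$ (and symmetrically $S(g)\in M^J$), and that $S$ is genuinely adjoint to $\overline{T}$ with respect to the $A$-valued inner products on $M^J$ and $N^J$. We already know from the paragraph preceding the statement that $\overline{T}(f)\in N_1(J)$ and $S(g)\in M_1(J)$, so the content of the first assertion is the extra condition defining $N^J$ inside $N_1(J)$: namely that $\langle h,\overline{T}(f)\rangle\in\Ran\alpha$ for every $h\in N_1(J)$. Unwinding the definition of the $M(J)$-valued inner product, $\langle h,\overline{T}(f)\rangle=m_{\overline{T}(f),h}$, whose left multiplier component sends $x\in J$ to $h\bigl(\overline{T}(f)(x)\bigr)^*$. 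So I would compute $\overline{T}(f)(x)$ for $x\in J$: since $x=x_1x_2$ and $\overline{T}(f)(x)=\overline{T}(f)(x_1)x_2=m_{f,T^*(\widehat{x_1})}\,x_2$, and since $f\in M^J$ forces $f(x_1)\in M_J\subset M$, one identifies $T^*(\widehat{x_1})$ with $\widehat{T(f(x_1))}$ up to the pairing, so that $\overline{T}(f)(x)$ is actually represented by the element $T(f(x))\in N$ (here using $f\in M^J$ so $f(x)\in M_J$, and the earlier lemma $T(M_J)\subset M_J$). Once $\overline{T}(f)$ is seen to agree on $J$ with $\widehat{\,T(f(\cdot))\,}$, the value $h(\overline{T}(f)(x))$ becomes $\langle \overline{T}(f)(x), h\rangle_{\text{as an element of }N_1(J)}$-type expression that visibly lies in $\Ran\alpha$ because it is built from the $A$-valued inner product on $N$; this is the crux of showing membership in $N^J$.

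Second, the symmetric claim $S(g)\in M^J$ is proved verbatim by swapping the roles of $M$ and $N$ and of $T$ and its formal adjoint; no new idea is needed, only the observation that the defining formula for $S$ is the mirror image of that for $\overline{T}$, so I would just say ``by the same argument''.

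Third, for adjointness I must check $\langle S(g),f\rangle_{M^J}=\langle g,\overline{T}(f)\rangle_{N^J}$ for all $f\in M^J$, $g\in N^J$. Both sides are elements of $A$ (identified via $\alpha$ inside $M(J)$), so it suffices to test them against all $x\in J$, i.e. to show the corresponding left multipliers of $J$ coincide. Writing $x=x_1x_2$ and using the module identities, the left-hand multiplier evaluated at $x$ is, after unwinding $\langle S(g),f\rangle=m_{f,S(g)}$, something like $S(g)(f(x_1))^*x_2$, and the right-hand one is $\overline{T}(f)(g(x_1^*))\cdots$; substituting the definitions $S(g)(m)=m_{T(m),g}$ and $\overline{T}(f)(n)=m_{f,T^*(\widehat n)}$ and the relation $T^*(\widehat n)(m)=\widehat n(Tm)=\langle n,Tm\rangle$, both sides reduce to the same expression in terms of the $A$-valued inner product on $N$ paired with $T$ applied to $f(x_1)$ — essentially $\langle g(x_1^*), T(f(x_1))\rangle$ times $x_2$, up to adjoints and the bookkeeping of left versus right multiplier slots. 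This is precisely the Hilbert-module version of the familiar identity $\langle Sg,f\rangle=\langle g,Tf\rangle$ restricted to $J$, and it is forced by the definition of a $J$-adjointable operator.

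The main obstacle I anticipate is purely bookkeeping rather than conceptual: keeping straight the multiplier-pair structure $m_{f,g}=(\lambda_{f,g},\rho_{f,g})$ with $\lambda_{f,g}(x)=g(f(x))^*$ and $\rho_{f,g}(x)=f(g(x^*))$, and making sure that the identifications $M\hookrightarrow M'$ (via $\widehat m$), $A\hookrightarrow M(J)$ (via $\alpha$), and $M^J\subset M_1(J)\subset M'$ are used consistently, so that an equality of elements of $M(J)$ is correctly read off from its action on $J$. The factorization $x=x_1x_2$ of every element of a $C^*$-algebra ideal is the one genuine tool that makes all the ``$\cdot x$ lands in $M$'' manipulations legitimate, exactly as in Lemma \ref{Lem2}; I would invoke it at each step where I need to pass a functional's value back into $M$ or $N$. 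Once the notation is pinned down, each of the three verifications is a short direct computation, and no estimate or limiting argument beyond the already-noted continuity of $T$ is required.
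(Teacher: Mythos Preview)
Your strategy is the paper's: test the multiplier identities on elements of $J$ and exploit the relation $\overline{T}(f)(x)=T(f(x))$ for $x\in J$. But the crucial step, showing $\overline{T}(f)\in N^J$, is not correctly argued. You must verify $m_{\overline{T}(f),h}\in\Ran\alpha$ for \emph{every} $h\in N_1(J)$, and your sentence ``visibly lies in $\Ran\alpha$ because it is built from the $A$-valued inner product on $N$'' does not do this: the individual values $h(T(f(x)))\in A$ are automatic; what is needed is a single $a\in A$ with $h(T(f(x)))^*=ax$ for all $x\in J$. The missing observation is that $h(T(f(x)))=T^*(h)(f(x))$, so the multiplier in question is exactly $m_{f,T^*(h)}$; one then checks $T^*(h)\in M_1(J)$ (from $J$-adjointability of $T$ together with $h\in N_1(J)$, via another factorization $x=x_1x_2$) and finally invokes the \emph{defining} property of $f\in M^J$. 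Your justification as written at best covers $h=\widehat n\in N$, not general $h\in N_1(J)$.

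There is also a notational slip that signals this confusion: you write $\overline{T}(f)(x_1)=m_{f,T^*(\widehat{x_1})}$, but $x_1\in J$ is not an element of $N$, so $T^*(\widehat{x_1})$ is meaningless. The formula $\overline{T}(f)(n)=m_{f,T^*(\widehat n)}$ applies only to $n\in N$; the symbol $\overline{T}(f)(x)$ for $x\in J$ denotes instead the element of $N$ representing the functional $\overline{T}(f)\cdot x$. Keeping these two usages straight, the paper performs a single computation $x^*m_{\overline{T}(f),g}y=x^*m_{f,T^*(g)}y$ for arbitrary $x,y\in J$ and $g\in N_1(J)$, uses essentiality of $J$ to deduce $m_{\overline{T}(f),g}=m_{f,T^*(g)}$, and reads off both membership in $N^J$ and the adjointness relation $\langle g,\overline{T}(f)\rangle=\langle S(g),f\rangle$ from this one identity; the three separate verifications you outline thus collapse into one.
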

\begin{proof}
Let $g\in N_1(J)$, $x,y\in J$. Then 
\begin{eqnarray*}
x^*m_{\overline{T}(f),g}y&=&x^*\langle g,\overline{T}(f)\rangle y\\
&=&\langle gx,\overline{T}(f)y\rangle\\
&=&\langle gx, T(fy)\rangle\\
&=&\langle T^*(gx),fy\rangle\\
&=&x^*\langle T^*(g),f\rangle y\\
&=&x^* m_{f,T^*(g)}y. 
\end{eqnarray*}
As $J$ is essential, this implies 
\begin{equation}\label{e2}
m_{\overline{T}(f),g}=m_{f,T^*(g)}. 
\end{equation}
We have $T^*(g)\in M_1(J)$, $f\in M^J$, hence $m_{f,T^*(g)}\in \Ran\alpha$, hence $m_{\overline{T}(f),g}\in\Ran\alpha$ for any $g\in N_1(J)$, hence $\overline{T}(f)\in N^J$. Similarly, $S(g)\in M^J$ for any $g\in N^J$. It  follows from (\ref{e2}) that $S$ is the adjoint for $\overline{T}$. 
\end{proof}

In the case when $N=M$ the set of $J$-adjointable operators is a Banach algebra. 
\begin{lem}
Let $T,S\in \mathbb B(M)$ be $J$-adjointable. Then
\begin{enumerate}
\item 
$TS$ is $J$-adjointable;
\item
$\underline{TS}=\underline{T}\cdot\underline{S}$ and $\overline{TS}=\overline{T}\cdot\overline{S}$.
\end{enumerate}

\end{lem}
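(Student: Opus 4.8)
The plan is to verify (1) first and then bootstrap the two factorization identities in (2) from the constructions $\underline{(\cdot)}$ and $\overline{(\cdot)}$ already in hand. For (1), I would start from the definition: $TS$ is $J$-adjointable iff $(TS)^*(\widehat{n})x\in\widehat{M}$ for all $n\in M$, $x\in J$. Since on the level of functionals one always has $(TS)^*=S^*T^*$ (this is the formal adjoint $M'\to M'$, which is automatic and needs no adjointability), it suffices to show $S^*\bigl(T^*(\widehat{n})\bigr)x\in\widehat{M}$. Here I would use the standard trick, used repeatedly in the excerpt, of factoring $x=x_1x_2$ with $x_1,x_2\in J$. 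Then $T^*(\widehat{n})x_1=\widehat{m}$ for some $m\in M$ because $T$ is $J$-adjointable, so $S^*\bigl(T^*(\widehat{n})\bigr)x=S^*(\widehat{m})x_2$, and this lies in $\widehat{M}$ because $S$ is $J$-adjointable. That gives (1).

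For (2), the restriction part $\underline{TS}=\underline{T}\cdot\underline{S}$ is almost immediate: by the lemma that $T(M_J)\subset M_J$ and $S(M_J)\subset M_J$, the composite $TS$ maps $M_J$ into $M_J$, and restriction of a composite is the composite of restrictions, so $\underline{TS}=(TS)|_{M_J}=(T|_{M_J})(S|_{M_J})=\underline{T}\cdot\underline{S}$. The extension part $\overline{TS}=\overline{T}\cdot\overline{S}$ requires checking equality of two operators on $M^J$ by evaluating on $n\in M$. Unwinding the definition, $\overline{TS}(f)(n)=m_{f,(TS)^*(\widehat{n})}=m_{f,S^*(T^*(\widehat{n}))}$. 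On the other hand $\bigl(\overline{T}\cdot\overline{S}\bigr)(f)(n)=\overline{T}\bigl(\overline{S}(f)\bigr)(n)=m_{\overline{S}(f),T^*(\widehat{n})}$, and by identity \eqref{e2} (applied to $S$ with $g=T^*(\widehat{n})\in M_1(J)$) this equals $m_{f,S^*(T^*(\widehat{n}))}$. So the two sides agree. One should double-check that $\overline{S}(f)\in M^J$ so that $\overline{T}$ may legitimately be applied to it, but that is exactly the content of the preceding lemma.

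The only genuinely delicate point is making sure the formal-adjoint composition law $(TS)^*=S^*T^*$ on $M'$ is being used correctly and that all the intermediate functionals land in the right submodules ($M_1(J)$, $M^J$) before each construction is invoked; everything else is a routine transcription of definitions and the $x=x_1x_2$ factorization. I expect the main obstacle, such as it is, to be bookkeeping: tracking which module each functional belongs to at each stage so that $\underline{(\cdot)}$ and $\overline{(\cdot)}$ are applied only where they are defined, and making sure the identity \eqref{e2} is invoked with the correct operator and the correct auxiliary element $g$.
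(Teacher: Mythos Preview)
Your proposal is correct and follows essentially the same route as the paper: the same $x=x_1x_2$ factorization and the identity $(TS)^*=S^*T^*$ for part (1), and for part (2) the observation that $\underline{(\cdot)}$ is literal restriction to $M_J$. Your verification of $\overline{TS}=\overline{T}\cdot\overline{S}$ via identity~\eqref{e2} is more explicit than the paper's one-line remark that $T\mapsto\overline{T}$ is also a restriction (through $T^*$) and hence multiplicative, but the substance is the same.
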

\begin{proof}
Let $x=x_1x_2$, $x,x_1,x_2\in J$, $n\in M$. As $T$ is $J$-adjointable, $T^*(\widehat{n}x_1)=\widehat m$ for some $m\in M$. Then 
$$
(TS)^*(\widehat{n}x)=S^*(T^*(\widehat{n}x_1)x_2)=S^*(\widehat{m}x_2)\in\widehat{M},
$$  
hence $TS$ is $J$-adjointable. As both maps $T\mapsto\underline{T}$ and $T\mapsto\overline{T}$ are restrictions (the latter is the restriction of $T^*$), these maps are homomorphisms.
\end{proof}

Denote by $\mathbb B_J(M)$ the Banach algebra of $J$-adjointable operators on $M$. Summing up, we have constructed homomorphisms 
$$
\underline{\alpha}:\mathbb B_J(M)\to \mathbb B^*(M_J) \quad\mbox{and}\quad \overline{\alpha}:\mathbb B_J(M)\to \mathbb B^*(M^J). 
$$
Although $\mathbb B_J(M)$ is not involutive, if $T\in \mathbb B_J(M)$ is adjointable then $\underline{\alpha}(T^*)=\underline{T}^*$ and $\overline{\alpha}(T^*)=\overline{T}^*$.
 
By Lemma \ref{Lem2}, we also have the restriction map $\alpha:\mathbb B^*(M^J)\to\mathbb B^*(M_J)$, which is a $*$-homomorphism.

\begin{thm}
The maps $\alpha$ and $\underline{\alpha}$ are isometric injective homomorphisms.

\end{thm}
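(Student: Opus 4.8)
The plan is to prove two separate statements: that $\alpha\colon\mathbb B^*(M^J)\to\mathbb B^*(M_J)$ is an isometric injective $*$-homomorphism, and that $\underline{\alpha}\colon\mathbb B_J(M)\to\mathbb B^*(M_J)$ is an isometric injective homomorphism. Since both maps land in $\mathbb B^*(M_J)$ and both are built from honest restriction of operators (for $\underline{\alpha}$ the operator $T$ itself, for $\alpha$ the operator on $M^J$), injectivity and isometry will in both cases follow from a single mechanism: if $N\subset M$ is a \emph{thick} submodule and $T\in\mathbb B(M)$ restricts to $M$ as an operator whose restriction to $N$ is zero, then $T=0$; and moreover $\|T|_N\|=\|T\|$. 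The first of these is immediate — $T(N)=0$ together with continuity and the density coming from thickness forces $T$ to vanish on all of $M$, because an element orthogonal to $N$ is zero, hence $N$ is dense enough. More precisely, for injectivity I would argue: if $Tm_0\ne 0$ for some $m_0$, pick $x\in J$ with $m_0x$ close to $m_0$ (possible since $M_J$ is thick and, being $\overline{MJ}$, actually dense is \emph{not} automatic — so one must be careful here).

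The main obstacle, and the point that needs real care, is precisely that a thick submodule need not be dense, so "restriction is injective" is not a triviality about dense subspaces. The right tool is: if $N\subset M$ is thick and $T\in\mathbb B^*(M)$ with $T|_N=0$, then for all $n\in N$ and $m\in M$ we have $\langle n, T^*m\rangle=\langle Tn,m\rangle=0$, so $T^*m\in N^\perp=0$, hence $T^*=0$, hence $T=0$. This argument uses adjointability of $T$ — which is exactly what we have for $\alpha$ (domain $\mathbb B^*(M^J)$) and for $\underline{\alpha}$ one uses instead that $\underline T$ is adjointable on $M_J$ by a previous lemma, but to conclude $T=0$ on $M$ one needs a slightly different route since $T$ itself may not be adjointable. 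For $\underline{\alpha}$: if $\underline T=T|_{M_J}=0$, then for every $m\in M$, $x\in J$ we get $T(mx)=T(m)x=0$, so $T(m)\in M$ satisfies $\langle T(m), M_J\rangle$-type relations — actually directly $T(m)J=0$, and since $J$ is essential (acting on $A$, hence on the generators $\langle T(m),m'\rangle$ of the module structure) one gets $\langle T(m),m'\rangle x = \langle T(m), m'x\rangle$; hmm, rather: $T(m)x=0$ for all $x\in J$ means $\langle T(m),T(m)\rangle x = \langle T(m)x, T(m)\rangle^*$... I would instead note $\|T(m)x\|=0$ and use $\langle T(m),T(m)\rangle\in A$ with $\langle T(m),T(m)\rangle \cdot x$ built from $T(m)x$, concluding $\langle T(m),T(m)\rangle=0$ by essentiality, hence $T(m)=0$.

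For the isometry statements I would proceed as follows. For $\alpha$: a $*$-homomorphism between $C^*$-algebras is automatically contractive, and an \emph{injective} $*$-homomorphism between $C^*$-algebras is automatically isometric; so once injectivity of $\alpha$ is established, isometry is free. For $\underline{\alpha}$, which is only a homomorphism between Banach algebras (the source $\mathbb B_J(M)$ is not involutive), I cannot invoke that general principle and must argue by hand. Here I would use that $\underline T$ is adjointable, so $\|\underline T\|^2=\|\underline T^*\underline T\|=\|\underline{T^*T}\|$ once we know $T^*T$ makes sense — but $T$ need not be adjointable. Alternatively, and more robustly: $\|\underline T\|\le\|T\|$ is clear; for the reverse, given $m\in M$ with $\|m\|=1$ and $\|T(m)\|>\|T\|-\e$, approximate: choose $x\in J$ with $0\le x\le 1$ and $\|mx-m\|$ small (using that $M_J$ is thick — again the subtlety: thickness gives $\langle \cdot\rangle$-density, and one shows $\|mx - m\|^2 = \|\langle m(1-x),m(1-x)\rangle\| = \|(1-x)\langle m,m\rangle(1-x)\|\to 0$ along an approximate unit of $J$, because $\langle m,m\rangle\in\overline{J\langle m,m\rangle J}$ when... no — one needs $\langle m,m\rangle\in \overline{AJA}=J$-ish, which holds since $J$ is essential hence $\langle m,m\rangle$ can be approximated; actually the cleanest fact is: for an essential ideal $J$ and an approximate unit $(u_\lambda)$ of $J$, $u_\lambda a\to a$ for all $a\in A$ — this is the standard characterization). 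Then $\|T(mx)\| = \|T(m)x\|$ is close to $\|T(m)\|$, and $mx\in M_J$, giving $\|\underline T\|\ge \|T(mx)\|/\|mx\|$ close to $\|T\|$. So the key lemma I would isolate first is: \emph{for an essential ideal $J\subset A$ and any approximate unit $(u_\lambda)$ of $J$, one has $\|m u_\lambda - m\|\to 0$ for every $m\in M$}; with that in hand both the isometry of $\underline{\alpha}$ and an alternative proof of injectivity of both maps fall out, and the whole argument is short.
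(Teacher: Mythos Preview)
Your treatment of $\alpha$ is correct and in fact cleaner than the paper's: once you observe that $\alpha$ is an injective $*$-homomorphism between $C^*$-algebras (injectivity via the thickness argument $\langle Tn,m\rangle=\langle n,T^*m\rangle=0$ for $n\in M_J$, forcing $T^*m\in M_J^\perp=0$), isometry is automatic. The paper instead runs the same argument for $\alpha$ as for $\underline{\alpha}$, so your shortcut for $\alpha$ is a genuine simplification. Your injectivity argument for $\underline{\alpha}$ is also fine and coincides with the paper's: from $T(mx)=0$ one gets $x^*\langle Tm,Tm\rangle x=0$ for all $x\in J$, and essentiality forces $\langle Tm,Tm\rangle=0$.

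The isometry argument for $\underline{\alpha}$, however, has a real gap. The ``key lemma'' you isolate --- that for an essential ideal $J\subset A$ and an approximate unit $(u_\lambda)$ of $J$ one has $\|mu_\lambda-m\|\to 0$ for every $m\in M$, equivalently $\|au_\lambda-a\|\to 0$ for every $a\in A$ --- is \emph{false}. Take $A=C[0,1]$, $J=C_0((0,1))$; this ideal is essential, but every $u_\lambda\in J$ vanishes at $0$, so $\|1-u_\lambda\|\ge 1$ for all $\lambda$. Thickness of $M_J$ in $M$ does not entail norm density, and in this example $M_J=J$ is a proper closed submodule of $M=A$. Consequently your approximation of $m$ by $mu_\lambda$ cannot be made to work, and the whole $\varepsilon$-argument collapses.

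The paper circumvents this by avoiding approximation altogether. It invokes Paschke's formula
\[
\|T\|^2=\inf\{K:\langle Tm,Tm\rangle\le K\langle m,m\rangle\ \ \forall m\in M\},
\]
so that the inequality $\|\underline{T}\|\ge\|T\|$ reduces to a pure $C^*$-algebra statement: if $c\in A$ is self-adjoint and $x^*cx\ge 0$ for every $x\in J$, then $c\ge 0$. This is proved by writing $c=c_+-c_-$, testing with $x=c_-^{1/2}y$ for $y\in J$, obtaining $-y^*c_-^2y\ge 0$, hence $c_-y=0$ for all $y\in J$, and invoking essentiality to get $c_-=0$. This positivity-transfer trick is the idea your proposal is missing.
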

\begin{proof}
As the proofs for $\alpha$ and $\underline{\alpha}$ are the same, we give only the latter.

Injectivity of $\underline{\alpha}$ would follow from isometricity, but can be proved directly. If $\underline{T}=0$ then $T(mx)=0$ for any $m\in M$, $x\in J$, therefore, $\langle T(mx),T(mx)\rangle=x^*\langle Tm,Tm\rangle x=0$, and essentiality of $J$ implies that $\langle Tm,Tm\rangle=0$ for any $m\in M$, hence $T=0$.

Let us show that $\|T\|=\|\underline{T}\|$ for any $T\in\mathbb B_J(M)$. As $\underline{T}$ is obtained by restricting $T$, we have $\|\underline{T}\|\leq\|T\|$. Recall that, by Remark 2.9 of \cite{PaschkeTAMS}, 
$$
\|T\|^2=\inf\{K:\langle Tm,Tm\rangle\leq K\langle m,m\rangle\ \ \forall m\in M\}, 
$$
so the estimate $\|T\|\leq\|\underline{T}\|$ would follow if we prove that if $x^*\langle Tm,Tm\rangle x\leq Kx^*\langle m,m\rangle x$ holds for any $x\in J$ then $\langle Tm,Tm\rangle\leq K\langle m,m\rangle$. Write $a=\langle Tm,Tm\rangle$, $b=K\langle m,m\rangle$, $c=a-b\in A$. It would suffice to prove that if $x^*cx\geq 0$ for any $x\in J$ then $c\geq 0$. Write $c=c_+-c_-$, where $c_+,c_-\in A$ are positive and $c_+c_-=0$. Set $x=c_-^{1/2}y$, $y\in J$. If $x^*cx\geq 0$ then $y^*c_-^{1/2}cc_-^{1/2}y\geq 0$ for any $y\in J$. But $y^*c_-^{1/2}cc_-^{1/2}y=-y^*c_-^2y$, thus we have $y^*c_-^2y=(c_-y)^*(c_-y)\leq 0$, hence $c_-y=0$ for any $y\in J$. As $J$ is essential, we conclude that $c_-=0$, and $c=c_+$ is positive.    
\end{proof}

\section{Examples} 

1. Let $A=\mathbb B(H)$ be the $C^*$-algebra of all bounded operators on a separable Hilbert space $H$, and let $J=\mathbb K(H)$ be its ideal of compact operators. Let $M,N$ be Hilbert $C^*$-modules over $A$, $T\in\mathbb B(M,N)$. It was shown in \cite{M-MN} that for any functional $f\in M'$ we have $fx\in\widehat{M}$ for any $x\in J$. As $T^*(n)\in M'$ for any $n\in N$, we conclude that any bounded operator $T$ is $J$-adjointable.

2. Let $A=C[0,1]$, and let $M=N=l_2(A)$ be the standard Hilbert $C^*$-module over $A$. It was shown in \cite{Kaad-Skeide} (see also \cite{M-MN}) that there exists a functional $f\in M'$ such that if $fa\in M$ for some $a\in A$ then $a=0$. Let $T=T_{k,f}:M\to M$ be given by $T(m)=kf(m)^*$ for some $k\in M$. Then 
\begin{eqnarray*}
T^*(\widehat{n})(m)&=&\widehat{n}(Tm)=\widehat{n}(kf(m)^*)\\
&=&f(m)\widehat{n}(k)=f(m)\langle k,n\rangle\\
&=&(f\langle k,n\rangle)(m),
\end{eqnarray*}
hence $T^*(\widehat{n})=f\langle k,n\rangle$. If $T^*(\widehat{n})\in\widehat{M}$ for any $n\in M$ then $\langle k,n\rangle=0$, which is patently false for $n=k$. Thus, $T_{k,f}$ is not $J$-adjointable for any essential ideal $J$. On the other hand, there exists a functional $f\in M'$ and an essential ideal $J\subset A$ such that $fx\in M$ for any $x\in J$. For such $f$ the operator $T_{k,f}$ is $J$-adjointable.

\end{document}